\documentclass[11pt]{amsart}

\usepackage{geometry}
\usepackage{amsmath,amssymb,amsthm,mathtools}
\usepackage[colorlinks=true,linkcolor=blue,citecolor=blue,urlcolor=blue]{hyperref}
\usepackage{tikz}
\usetikzlibrary{calc}

\newtheorem{theorem}{Theorem}

\newtheorem{corollary}[theorem]{Corollary}
\newtheorem{lemma}[theorem]{Lemma}
\newtheorem{definition}[theorem]{Definition}
\newtheorem{remark}[theorem]{Remark}
\newtheorem{example}[theorem]{Example}

\newcommand{\Ed}{\operatorname{Ed}}

\title{Polynomiality of the Generalized Verschiebung Degree}
\author{Siqing Zhang}
\date{}
\begin{document}
\maketitle
\thispagestyle{empty}

\begin{abstract}
For a general curve in positive characteristic, taking the Frobenius pullback induces a generically finite rational map $V$ on the moduli space of rank 2 vector bundles with trivial determinant.
Recently, Kondo--Wakabayashi show that the generic degree of $V$, considered as a function on the characteristic of the base field, is a quasi-polynomial.
In this paper, we show that this quasi-polynomial is indeed a polynomial, and we write out this polynomial explicitly.
\end{abstract}

\section{Introduction}

Let $X$ be a smooth curve in characteristic $p>2$.
Let $F_X: X\to X'$ be the relative Frobenius morphism over $k$.
Let $M_2(X)$ and $M_2(X')$ be the moduli space of stable rank 2 vector bundles of trivial determinant on $X$ and $X'$ respectively.
Taking the Frobenius pullback $E\mapsto F_X^*E$ defines a generically finite rational map $V: M_2(X')\dashrightarrow M_2(X)$, called the \textit{generalized Verschiebung} \cite[Theorem A.6]{Oss1}.
The behavior of this rational map has been studied extensively; see, e.g.
\cite{Gie,LP1,LP2,Oss1,JRXY,LanPa,JoPa,HoWa}.

When $X$ is a general curve of genus $g=2$, the generic degree of $V$ is $\deg(V)=\frac{p^3+2p}{3}$, as shown by Osserman \cite[Theorem 1.3]{Oss1} and Lange--Pauly
\cite[Corollary]{LanPa}.
For higher genus, Kondo--Wakabayashi recently expressed the generic degree in
terms of certain edge-numberings on a graph \cite[Theorem 5.3]{KW}.  
They also show that
$\deg(V)$, as a function of the characteristic $p$, is given by a
quasi-polynomial in $p$ of degree \(3g-3\) \cite[Theorem 5.5]{KW}.
They then use direct computation to show that this quasi-polynomial is a polynomial for $g=3$ \cite[Theorem C]{KW}.

The first result of this paper shows that the quasi-polynomial $\deg(V)$ is always a polynomial in $p$ for arbitrary genus $g\geq 2$.

\begin{theorem}[Polynomiality]\label{thm: polynomiality}
    Let $X$ be a general curve of genus $g\geq 2$ over an algebraically closed field of characteristic $p>2$.
    Let $R_g(t)\in \mathbb{Q}[t]$ be the following polynomial:
    \[R_g(t)=
1-\frac{1}{2^g}+\frac{1}{2^g}
\sum_{r=0}^{g-1}
\frac{(-1)^{r-1} 2^{4r}B_{2r}}{(2r)!}
\left[z^{-2r}\right]
\csc^{2g-2}(z)\,
t^r ,\]
where $B_{2r}$ denotes the $2r$-th Bernoulli number, and $\left[z^{-2r}\right]
\csc^{2g-2}(z)$ denotes the coefficient of $z^{-2r}$ in the Laurent expansion of $\csc^{2g-2}(z)$ at $z=0$.
Then 
\[\deg(V)=p^{g-1} R_g(p^2).\]
\end{theorem}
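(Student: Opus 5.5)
The plan is to convert the Kondo--Wakabayashi combinatorial formula \cite[Theorem 5.3]{KW} into a Verlinde-type trigonometric sum, and then evaluate that sum in closed form as a polynomial in $p$. Recall that \cite[Theorem 5.3]{KW} expresses $\deg(V)$ as the number of edge-numberings of a trivalent graph $\Gamma$ of genus $g$ ($2g-2$ vertices, $3g-3$ edges). The labels lie in a range $\{1,\dots,p-1\}$, with a parity constraint, and at each vertex the three incident labels $(a,b,c)$ satisfy quantum Clebsch--Gordan type conditions: triangle inequalities, a bound $a+b+c<2p$ (or similar), and a parity condition on $a+b+c$. These are exactly the $\mathfrak{sl}_2$ fusion rules at level $k=p-2$, possibly restricted to a sublattice of labels.

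\textbf{Step 1: fusion algebra.} I will encode each vertex condition as a fusion coefficient $N_{abc}\in\{0,1\}$. The count then becomes a contraction of fusion tensors along $\Gamma$, which is independent of $\Gamma$ by associativity of the fusion ring. I will diagonalize the fusion ring using the $S$-matrix $S_{aj}=\sqrt{2/p}\,\sin(\pi a j/p)$. This gives the standard gluing identity
\[
\sum_{\text{labelings}}\prod_v N_{v}=\sum_j S_{0j}^{2-2g}.
\]
Here $S_{0j}$ is the vacuum column, i.e.\ the column for label $1$ in the shifted convention.

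The parity restriction on labels is where the correction $1-2^{-g}$ and the factor $2^{-g}$ should come from. I will handle it by writing the indicator of the allowed parity class as $\tfrac12(1\pm\epsilon(a))$, where $\epsilon$ is the simple-current character $a\mapsto(-1)^{a}$, which corresponds to $j\mapsto p-j$ on the $S$-side. Expanding the product over the $g$ independent cycles of $\Gamma$ (after choosing a spanning tree, only $g$ independent parity choices remain) produces $2^{-g}\sum_{\chi}$ over $2^g$ twisted Verlinde sums. The untwisted term gives the main $\csc$-sum. The twisted terms pair $j$ with $p-j$ and should collapse to an elementary contribution, which I expect to produce the constant $(1-2^{-g})p^{g-1}$. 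The result is
\[
\deg(V)=p^{g-1}\Bigl(1-2^{-g}\Bigr)+\frac{p^{g-1}}{2^{g}}\cdot\frac{1}{2^{g-1}}\sum_{j=1}^{p-1}\csc^{2g-2}\!\Bigl(\frac{\pi j}{p}\Bigr),
\]
up to normalizations that I will fix by checking $g=2$ against $\frac{p^3+2p}{3}$.

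\textbf{Step 2: evaluating the trigonometric sum.} I will evaluate $\sum_{j=1}^{p-1}\csc^{2m}(\pi j/p)$ with $m=g-1$ by a residue argument. Consider the meromorphic function
\[
f(z)=\csc^{2m}(z)\cdot p\cot(pz)
\]
on the torus $\mathbb{C}/\pi\mathbb{Z}$. Its poles are at $z=\pi j/p$ with residues $\csc^{2m}(\pi j/p)$, plus a pole at $z=0$. Since the total residue vanishes, the sum equals $-\operatorname{Res}_{z=0}f$ plus a correction from the pole at $0$. Expanding
\[
p\cot(pz)=\sum_{r}\frac{(-1)^r2^{2r}B_{2r}}{(2r)!}p^{2r}z^{2r-1}
\]
and pairing this with $[z^{-2r}]\csc^{2m}(z)$ gives a polynomial in $p^2$ of degree $m$. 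Its coefficients have exactly the shape $\frac{(-1)^{r-1}2^{2r}B_{2r}}{(2r)!}[z^{-2r}]\csc^{2g-2}(z)$. The extra factor $2^{2r}$ in the theorem comes from rescaling, if the natural labeling uses $2\pi j/p$, or from the $j\leftrightarrow p-j$ folding.

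\textbf{Main obstacle.} I expect the main obstacle to be Step 1, specifically matching the precise KW conditions (odd versus even labels, the exact upper bound) with a fusion ring, and computing the twisted sums exactly. If the KW labels live only on one parity class, the relevant ring is the even part of the level-$(p-2)$ fusion ring, whose $S$-matrix is an index-2 folding. Its ``fixed point'' behavior is what generates the $1-2^{-g}$ term. I will first verify the full identity numerically for $g=2$ and $g=3$, comparing with \cite[Theorem C]{KW}, to pin down every constant before writing the general argument.
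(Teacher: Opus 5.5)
There is a genuine gap at the very first step: you misremember what \cite[Theorem 5.3]{KW} says. It does not express $\deg(V)$ as a single count of level-$(p-2)$ $\mathfrak{sl}_2$ fusion labelings with a parity restriction. It expresses $\deg(V)$ as the ratio $|\Ed_{p,2,G}|/|\Ed_{p,1,G}|$, where $\Ed_{p,2,G}$ allows labels up to $p^2-2$, subject to the digit-wise conditions \eqref{eq: am}--\eqref{eq: lower} modulo both $p$ and $p^2$. The set you describe is, up to shifting conventions, $\Ed_{p,1,G}$. That is Mochizuki's count of dormant opers, not the Verschiebung degree. Your plan never handles the level-$2$ conditions or the division by $|\Ed_{p,1,G}|$, and this is exactly where the paper's new input lies. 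The paper proves a level-reduction bijection $\Ed_{p,2,G}\cong\Ed_{p,1,G}\times\Ed_{2p,1,G}$ (Theorem~\ref{thm: level-reduction}, built vertex by vertex from the map $\Phi$ in Lemma~\ref{lm: local fac}), which gives $\deg(V)=|\Ed_{2p,1,G}|$. Without this or a substitute, there is no single fusion-rule count to diagonalize.

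Consequently your Step~1 formula is wrong, not merely mis-normalized. Its main term $\frac{p^{g-1}}{2^{2g-1}}\sum_{j=1}^{p-1}\csc^{2g-2}(\pi j/p)$ is precisely Wakabayashi's formula for $|\Ed_{p,1,G}|$. The constant $(1-2^{-g})p^{g-1}$ is reverse-engineered from the target with no mechanism behind it. For $g=2$ your formula gives $(p^3+17p)/24$ instead of $(p^3+2p)/3$. In general the leading coefficient is off by $2^{3g-3}$, the effect of dilating a $(3g-3)$-dimensional polytope by $2p$ rather than $p$, so no normalization can repair it.

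The correct count $|\Ed_{2p,1,G}|$ has labels $0,\dots,p-1$ with $a+b+c\le 2p-2$ and no parity condition. This is the integer-spin part of the level-$(2p-2)$ fusion rules, not level $p-2$. Its eigenvectors are $\sqrt{2/p}\,\sin((2a+1)j\pi/(2p))$ for $1\le j\le p-1$, together with the fixed-point column $(-1)^a/\sqrt p$. This gives
\[
\deg(V)=p^{g-1}+\Bigl(\frac p2\Bigr)^{g-1}\sum_{j=1}^{p-1}\csc^{2g-2}\Bigl(\frac{\pi j}{2p}\Bigr).
\]
The term $p^{g-1}$ comes from the fixed-point column. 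The factor $1-2^{-g}$ appears only after folding to $\sum_{j=1}^{2p-1}$ and subtracting the $j=p$ term $\csc(\pi/2)=1$; no twisted Verlinde sums are involved.

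Your Step~2 residue argument is sound on the cylinder $\mathbb{C}/\pi\mathbb{Z}$ (not a torus; use the decay of $\csc^{2m}$ at $\pm i\infty$). However, it must be run with $2p\cot(2pz)$ in place of $p\cot(pz)$. That substitution $p\mapsto 2p$ is the actual source of the factor $2^{4r}$, not a rescaling or a $j\leftrightarrow p-j$ folding.
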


The proof relies on a new combinatorial result in Wakabayashi's \textit{enumerative geometry of
dormant opers}, see \cite{WakSpin} and
\cite[\S1]{WakTQFT} for an illuminating introduction of the related ideas.

Namely, given a finite connected trivalent graph $G$ of genus $g$, and two natural numbers $P\geq 3$ and $N\geq 1$, Kondo--Wakabayashi define a finite set  $\Ed_{P,N,G}$ of certain edge-numberings on $G$ and show that 
\begin{equation}\label{eqn: deg}
    \deg(V)=\frac{|\Ed_{p,2,G}|}{|\Ed_{p,1,G}|}.
\end{equation}
The set $\Ed_{P,N,G}$ is recalled in Definition \ref{def: Ed}.
When $N=1$, the set $\Ed_{P,1,G}$, implicit already in works of S. Mochizuki, is given by the lattice points in a dilation of a polytope defined by Liu-Osserman in \cite[Definition 2.3]{LO}. 
Our combinatorial result reduces the level-$N$ edge-numbering sets to the level-1 counterparts.

\begin{theorem}[Level-reduction]\label{thm: level-reduction}
    Let $P\geq 3$ be an odd integer and $N\geq 1$ be an integer, and let $G$ be a finite trivalent graph.
    Let $\Ed_{P,N,G}$ be the set of $(P,N)$-edge-numberings of $G$ as in Definition \ref{def: Ed}.
    Then, there is a natural bijection
    \[\Ed_{P,N,G}\cong \Ed_{P,1,G}\times \Ed_{2P,1,G}^{N-1}.\]
\end{theorem}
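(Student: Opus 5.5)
The plan is to build the bijection by a digit-by-digit decomposition of a $(P,N)$-edge-numbering, in the spirit of a $P$-adic expansion, and to prove it by induction on $N$. The case $N=1$ is tautological, so the real content is the inductive step
\[\Ed_{P,N,G}\cong \Ed_{P,N-1,G}\times \Ed_{2P,1,G},\]
obtained by splitting off one level. Iterating this step gives the stated bijection. Naturality means the map commutes with graph automorphisms and is compatible with gluing along edges.

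First, I unpack Definition \ref{def: Ed} into local data. A $(P,N)$-edge-numbering assigns to each edge $e$ a value $a_e$ in a fixed range determined by $P$ and $N$. Every condition on the numbering is imposed at single trivalent vertices: a parity condition on $a_{e_1}+a_{e_2}+a_{e_3}$, triangle-type inequalities, and an upper bound on the sum, all at level $N$. Because the conditions are vertex-local, it suffices to construct a bijection at a single vertex. This local bijection must act edgewise, sending $a_e$ to a pair $(b_e,c_e)$ that depends only on $a_e$. That requirement is what lets the local bijections glue to a global one on $G$.

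Second, I define the edgewise map as a division with remainder adapted to the reflection symmetry $a\mapsto P-1-a$ built into these polytopes. I write $a_e = P\,c_e + \epsilon_e b_e$, or an analogous reflected expression, where $b_e$ lies in the level-$(N-1)$ range and $c_e$ in the level-$1$ range for $2P$. The sign or reflection $\epsilon_e$ is determined by the parity of $c_e$, which folds the range as a "zig-zag". The parity of $c_e$ thus records which reflected copy of the $P$-interval $a_e$ lies in. This is exactly why the remaining top-level factor uses $2P$ rather than $P$: it must keep track both of the block index and of the reflection.

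Third, and this is the main obstacle, I check that the vertex conditions decouple under this map. The claim is that $(a_{e_i})_i$ satisfies the level-$N$ conditions if and only if $(b_{e_i})_i$ satisfies the level-$(N-1)$ $P$-conditions and $(c_{e_i})_i$ satisfies the level-$1$ $2P$-conditions. I would first prove the implication from level $N$ to the pair by a case analysis on the parities of $c_{e_1},c_{e_2},c_{e_3}$. I expect the parity constraint on $\sum a_{e_i}$ together with $P$ odd to force $\sum c_{e_i}$ even in the needed way, with the triangle inequalities for the $a$'s splitting into those for $b$ and $c$ after reflecting. I would then check the converse. If the naive zig-zag fails in some parity case, I would adjust the decomposition by reflecting the $b$'s around the midpoint $(P-1)/2$ only at edges where $c_e$ is odd; this is presumably where oddness of $P$ enters. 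Finally, the edgewise inverse map is immediate, so bijectivity follows once the vertex-condition equivalence is established.
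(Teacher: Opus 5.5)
Your architecture matches the paper's: peel off one level, make the local bijection act edgewise so that it glues over $G$, and let a $2P$-label record a block index together with a reflection bit. However, the map you actually write down is wrong, and the step you call the main obstacle is not carried out. Read literally, $a_e=Pc_e+\epsilon_e b_e$ with $c_e$ a coordinate of ${}^\dagger C_1(2P)$ overshoots the range. For $P=3$, $N=2$ (where ${}^\dagger C_1(3)=\{(0,0,0)\}$ forces $b_e=0$) it sends $(2,2,0)\in{}^\dagger C_1(6)$ to $(6,6,0)$, whose sum exceeds $P^2-2=7$. Reading it as ``alternate blocks are reflected'' also fails. The coordinate $2$ of $(2,2,0)\in{}^\dagger C_2(3)$ lies in the unreflected block $[0,2]$, which gives $b=2$, and $2$ is not a coordinate of ${}^\dagger C_1(3)$.

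The correct scale is $P^{N-1}$, and the parity of the $2P$-label must index \emph{half}-blocks. The paper uses $\Phi(n,b)=\tfrac n2P^{N-1}+b$ for $n$ even and $\tfrac{n+1}{2}P^{N-1}-1-b$ for $n$ odd. So $b$ is whichever of $[a]_{N-1}$ and $P^{N-1}-1-[a]_{N-1}$ is at most $(P^{N-1}-3)/2$. Your remark that the inverse is ``immediate'' hides the point that makes it edgewise. Condition \eqref{eq: am} only says that \emph{some} reflection choice works at each vertex, and one must show the choice is dictated by $a_e$ alone. The paper gets this from the bound $x_i\le (P^{N-1}-3)/2$ on coordinates of ${}^\dagger C_{N-1}(P)$, together with $P^{N-1}$ being odd, so at most one of the two candidates can occur. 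This, not a repair step, is where oddness of $P$ enters.

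You also misremember the definition. ${}^\dagger C_N(P)$ has no parity condition. Its essential extra content, the conditions \eqref{eq: am}--\eqref{eq: lower} on residues modulo $P^M$ for every $M<N$, is missing from your description, yet it is exactly what places $b$ in ${}^\dagger C_{N-1}(P)$: the residues of $a$ and of $b$ modulo $P^M$ agree up to reflection for $M\le N-1$. So the mechanism you anticipate, that a parity constraint forces $\sum c_{e_i}$ to be even, cannot be right. For instance $(1,1,1)\in{}^\dagger C_1(6)$ corresponds to $(2,2,2)\in{}^\dagger C_2(3)$, so odd $n_1+n_2+n_3$ must be handled.

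The missing idea is the identity $2\Phi(n_i,b_i)+1=P^{N-1}n_i+w_i$, with $w_i=2b_i+1$ or $P^{N-1}-(2b_i+1)$ according to the parity of $n_i$. Via Lemma \ref{eqn: four ineq}, it rewrites each of the four level-$N$ inequalities as $P^{N-1}X+Y>0$, where:
\begin{itemize}
\item $X$ is one of $2P-2-\sum n_i$ or $-n_i+\sum_{j\ne i}n_j$, whose nonnegativity is exactly $(n_i)\in{}^\dagger C_1(2P)$;
\item $Y$ runs over a permutation of $(H_0,\dots,H_3)$ if $\sum n_i$ is even, and of $(P^{N-1}-H_0,\dots,P^{N-1}-H_3)$ if it is odd;
\item the $H_i$ are the four forms of Lemma \ref{eqn: four ineq} for $(b_i)$ at level $N-1$, so $0<H_i<2P^{N-1}$ once $(b_i)\in{}^\dagger C_{N-1}(P)$.
\end{itemize}
Since all four $X$'s share the parity of $\sum n_i$, one gets $P^{N-1}X+Y>0\iff X\ge 0$ in both parity cases. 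That is the decoupling in both directions. Without this computation, ``the triangle inequalities split after reflecting'' remains an expectation rather than a proof.
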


\subsubsection*{Combinatorial background}

In his work on $p$-adic Teichm\"uller Theory \cite{Moc}, S. Mochizuki linked the nowadays so-called \textit{dormant opers} to the uniformization problem of $p$-adic curves, and showed that the number of dormant opers in characteristic $p$ is given by $|\Ed_{p,1,G}|$, see Liu-Osserman's summary in \cite[Theorem 3.9]{LO}.

By Ehrhart Theory \cite[Theorem 4.6.8]{StanleyEC1}, the number $|\Ed_{P,1,G}|$ of elements of $\Ed_{P,1,G}$, considered as a function of $P$, is a priori a \textit{quasi-polynomial}.
That is, a function $f:\mathbb{Z}\to \mathbb{R}$ of the form $f(n)=\sum_{i=0}^M c_i(n) n^i$, where each $c_i(n)$ is a periodic function for $n$.
Liu-Osserman show the polynomiality of $|\Ed_{P,1,G}|$ for odd $P$ in \cite[Corollary 3.6]{LO}.
In \cite[Theorem 5]{FdPAR}, the polynomiality of $|\Ed_{P,1,G}|$ for even $P$ is shown by Fernandes--de Pina--Ramírez Alfonsín--Robins.

The derivation of the explicit formulas for the odd and even polynomials have a different flavor.
The polynomial for $|\Ed_{P,1,G}|$ when $P$ is odd is given by Wakabayashi in \cite[Theorem A]{WakFormula} as 
\[
        |\Ed_{P,1,G}|
        =
        \frac{P^{g-1}}{2^{2g-1}}
        \sum_{\theta=1}^{P-1}
        \csc^{2g-2}\!\left(\frac{\pi\theta}{P}\right)
        \qquad (P\ \mathrm{odd}).
\]
His proof passes through Quot schemes and Gromov-Witten Theory.
In the proof of Theorem \ref{thm: polynomiality} (see \eqref{eqn: deg trig}), we give an elementary derivation of the corresponding formula for even $P$:
\[
        |\Ed_{P,1,G}|
        =
        \left(\frac P2\right)^{g-1}
        +
        \left(\frac P4\right)^{g-1}
        \sum_{\theta=1}^{P/2-1}
        \csc^{2g-2}\!\left(\frac{\pi\theta}{P}\right)
        \qquad (P\ \mathrm{even}).
\]

\begin{remark}
    Theorem \ref{thm: level-reduction} and \eqref{eqn: deg} give us $\deg(V)=|\Ed_{2p,1,G}|$.
Therefore, we have the following heuristic observation, which we find amusing:
The generic degree $\deg(V)$ in characteristic $p$ agrees with the number of dormant opers over a nonexistent ``field of characteristic $2p$".
\end{remark}

\subsubsection*{Formalization}
The combinatorial cores of this paper, namely, the proofs of Lemma \ref{lm: local fac}, Theorem \ref{thm: level-reduction}, and Lemma \ref{lm: trig}, have been formalized and verified in Lean 4, see \cite{VerschiebungLean}.

\subsubsection*{Outline}
We first prove Theorem \ref{thm: level-reduction} in \S\ref{sec: proof theorem 2}.
We then prove Theorem \ref{thm: polynomiality} in \S\ref{sec: polynomiality}.
Finally, we briefly recall the notion of dormant opers and record the related Corollary \ref{cor: counts} in \S\ref{sec: oper}.

\section{Proof of Theorem \ref{thm: level-reduction}}\label{sec: proof theorem 2}

The set $\Ed_{P,N,G}$ used by Kondo--Wakabayashi in \cite{KW} is given by certain numberings of the edges of $G$, such that, at each vertex of $G$, the numbers are constrained by a set ${}^\dagger C_N(P)$.
In \S\ref{subsec: setup}, we recall the set ${}^\dagger C_N(P)$ and prove a relation among the sets when we change $N$'s and $P$'s.
In \S\ref{subsec: global}, we recall the set $\Ed_{P,N,G}$ and use \S\ref{subsec: setup} to prove Theorem \ref{thm: level-reduction}.

\subsection{The set ${}^\dagger C_N(P)$ as in \cite[\S5.1]{KW}}\label{subsec: setup}\;

For integers $P\geq 2$ and $N\geq 1$, let ${}^\dagger C_N(P)$ denote the set of triples $(a_1,a_2,a_3)\in \mathbb{Z}_{\geq 0}^3$ such that 
\begin{equation}\label{eq: upper}
    a_1+a_2+a_3\leq P^N-2,\quad |a_2-a_3|\leq a_1\leq a_2+a_3,
\end{equation}
and such that, for every $1\leq M\leq  N$, if we let $[a_i]_M$ be the remainder of $a_i$ modulo $P^M$, then there are choices
\begin{equation}\label{eq: am}
    a_{i,M}\in \{[a_i]_M,\, P^M-1-[a_i]_M\}
\end{equation}
such that 
\begin{equation}\label{eq: lower}
a_{1,M}+a_{2,M}+a_{3,M}\leq P^M-2,\quad |a_{2,M}-a_{3,M}|\leq a_{1,M}\leq a_{2,M}+a_{3,M}.
\end{equation}
When $N=1$ or when $M=N$, we can choose $a_{i,N}=a_i$ and the condition \eqref{eq: lower} is redundant.

Note that the definition of ${}^\dagger C_N(P)$ is independent of the ordering of the triple $(a_1,a_2,a_3)$.

We record a lemma that will be useful later on. The proof is straightforward.

\begin{lemma}\label{eqn: four ineq}
    The condition \eqref{eq: upper} is equivalent to the following four inequalities
\begin{align*}
    2P^N -(2a_1+1)-(2a_2+1)-(2a_3+1) &>0,\\
    -(2a_i+1)+\sum_{j\neq i}(2a_j+1)&>0, \quad i=1,2,3.
\end{align*}
\end{lemma}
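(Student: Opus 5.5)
The plan is a direct verification. Set $b_i = 2a_i + 1$, so the claimed inequalities read $2P^N - b_1 - b_2 - b_3 > 0$ and $b_j + b_k - b_i > 0$ for $\{i,j,k\} = \{1,2,3\}$. We then translate each of these back into a condition on the $a_i$. Since the $a_i$ are integers, every quantity involved is an integer, and each strict inequality can be converted to a non-strict one by subtracting $1$ and then using parity.

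\textbf{First inequality.} It says $2(a_1 + a_2 + a_3) + 3 < 2P^N$, that is, $2(a_1+a_2+a_3) \le 2P^N - 4$, since both sides are even integers. This is exactly $a_1 + a_2 + a_3 \le P^N - 2$.

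\textbf{Remaining three inequalities.} For $i=1$ we get $2(a_2 + a_3 - a_1) + 1 > 0$, which for integers is equivalent to $a_2 + a_3 - a_1 \ge 0$, i.e.\ $a_1 \le a_2 + a_3$. Likewise $i=2$ gives $a_2 \le a_1 + a_3$, i.e.\ $a_2 - a_3 \le a_1$, and $i=3$ gives $a_3 - a_2 \le a_1$. Together the last two are $|a_2 - a_3| \le a_1$.

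Combining, the four inequalities are equivalent to the conditions in \eqref{eq: upper}. Nonnegativity of the $a_i$ is part of the ambient assumption $(a_1,a_2,a_3)\in\mathbb{Z}_{\ge 0}^3$ on both sides and needs no separate argument.

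The argument is routine; the only point to watch is the integrality and parity step that turns each strict inequality into the non-strict one. In particular, the first inequality needs the evenness argument: from $2s + 3 < 2P^N$ we get $2s \le 2P^N - 4$, and not merely $2s \le 2P^N - 3$.
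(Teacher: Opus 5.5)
Your proof is correct and is the direct verification the paper has in mind: the paper omits the argument, saying only that it is straightforward. Substituting $2a_i+1$ and using integrality (with the parity step $2s+3<2P^N \Leftrightarrow s\le P^N-2$) is all that is needed to match each strict inequality with the corresponding condition in \eqref{eq: upper}.
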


\begin{example}
${}^\dagger C_1(3)$ is the singleton $\{(0,0,0)\}$.

${}^\dagger C_2(3)$ is 
\[\{(0,0,0)\}\cup S_3\cdot (2,2,0)\cup \{(2,2,2)\} \cup S_3\cdot (3,2,2)\cup S_3\cdot (3,3,0),\]
where $S_3\cdot x$ means the orbit of $x$ under the action of $S_3$ by permuting the coordinates.

${}^\dagger C_1(6)$ is 
\[\{(0,0,0)\}\cup S_3\cdot (1,1,0)\cup \{(1,1,1)\} \cup S_3\cdot (2,1,1)\cup S_3\cdot (2,2,0).\]
\end{example}

Note that, in this example, we have a bijection ${}^\dagger C_2(3)\cong {}^\dagger C_1(3)\times {}^\dagger C_1(6)$.
More explicitly, this bijection sends every instance of 2 in ${}^\dagger C_1(6)$ to $3\times \frac{2}{2}+0$ in ${}^\dagger C_2(3)$ and every instance of 1 in ${}^\dagger C_1(6)$ to $3\times\frac{1+1}{2}-1-0$.
Below we show that this bijection generalizes. 

\begin{definition}
For each $(n,b)\in \mathbb{Z}^2$, let $\Phi(n,b)\in \mathbb{Z}$ be 
\begin{equation}\label{eq: Phinb}
    \Phi(n,b)=\begin{cases}
        \frac{n}{2}P^{N-1}+b, & n\ \text{even},\\
       \frac{n+1}{2} P^{N-1}-1-b, & n\ \text{odd}.
        \end{cases}
\end{equation}
Let $\Phi^{(3)}: \mathbb{Z}^3\times \mathbb{Z}^3\to \mathbb{Z}^3$ be the function that sends $((n_i)_{i=1,2,3}, (b_i)_{i=1,2,3})$ to $(\Phi(n_i,b_i))_{i=1,2,3}$.
\end{definition}

\begin{lemma}[The local factorization lemma]\label{lm: local fac}
    Assume that $P\geq 3$ is odd and $N\geq 2$, then $\Phi^{(3)}$ restricts to a bijection
    \begin{equation}
        \Phi^{(3)}: {}^\dagger C_1(2P)\times {}^\dagger C_{N-1}(P)\xrightarrow{\sim} {}^\dagger C_N(P).
    \end{equation}
\end{lemma}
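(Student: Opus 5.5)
Throughout write $Q=P^{N-1}$. The plan is to first understand $\Phi$ on a single coordinate, then reduce everything to the four sign conditions of Lemma \ref{eqn: four ineq}.

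\emph{Single coordinate.} For $0\le b\le Q-1$:
\begin{itemize}
\item if $n$ is even, $\Phi(n,b)=\tfrac n2 Q+b$ has quotient $\tfrac n2$ and remainder $b$ modulo $Q$;
\item if $n$ is odd, $\Phi(n,b)=\tfrac{n-1}2Q+(Q-1-b)$ has quotient $\tfrac{n-1}2$ and remainder $Q-1-b$.
\end{itemize}
Hence, for $a=qQ+r$ with $0\le r<Q$, the preimages of $a$ with $b\in[0,Q-1]$ are exactly $(2q,r)$ and $(2q+1,Q-1-r)$. So choosing a preimage of $a$ is the same as choosing $a_{N-1}\in\{[a]_{N-1},Q-1-[a]_{N-1}\}$ as in \eqref{eq: am}, with $b=a_{N-1}$ and the parity of $n$ recording the choice. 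Since $P^M\mid Q$ for $M\le N-1$, we have $[Q-1-r]_M=P^M-1-[r]_M$. Thus the unordered pair $\{[a]_M,\,P^M-1-[a]_M\}$ equals $\{[b]_M,\,P^M-1-[b]_M\}$ for every $M\le N-1$. Consequently the conditions \eqref{eq: lower} for $a$ at levels $M\le N-2$ are literally the defining conditions of $b\in{}^\dagger C_{N-1}(P)$ at those levels. The level $M=N-1$ condition for $a$ becomes \eqref{eq: upper} for $b$ with bound $P^{N-1}-2$, provided the choices $a_{i,N-1}$ are those recorded by $b$.

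\emph{Reduction to linear forms.} It remains to match \eqref{eq: upper} for $a$ (bound $P^N-2$) with \eqref{eq: upper} for $n$ in ${}^\dagger C_1(2P)$ (bound $2P-2$), given $b\in{}^\dagger C_{N-1}(P)$. I will use Lemma \ref{eqn: four ineq} in odd variables: $x_i=2a_i+1$, $\nu_i=2n_i+1$, $\beta_i=2b_i+1\in(0,2Q)$. One computes
\[x_i=n_iQ+\beta_i \ (n_i \text{ even}),\qquad x_i=(n_i+1)Q-\beta_i \ (n_i\text{ odd}).\]
Each of the four forms $L(x)=\sum\epsilon_i x_i+c$ (with $c=2P^N$ or $0$) then splits as $Q\cdot(\text{integer form in }n)$ plus a signed combination of the $\beta_i$. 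I will use $P$ odd, which gives $2P^N=2P\cdot Q$ and makes $Q$ odd, so the parities of $n_i$ and $\beta$-terms interact cleanly. The claimed equivalence, for each form, is
\[L(x)>0 \iff L(\nu)>0 \text{ for the } 2P\text{-system, given } L'(\beta)>0 \text{ for the } 2Q\text{-system}.\]
I expect this to follow from the bound that the $\beta$-part lies strictly between $-Q$ and $Q$ times a small constant, together with a parity argument. The parity argument should show that when the $n$-form vanishes or sits at the boundary, the sign is decided by the $\beta$-inequalities of $b$. This should give well-definedness of $\Phi^{(3)}$ into ${}^\dagger C_N(P)$.

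\emph{Bijectivity.} Injectivity follows from the single-coordinate analysis, since $(n,b)$ is recovered from $a$ and the parity of $n$. For surjectivity, take $a\in{}^\dagger C_N(P)$ and use the choices $a_{i,N-1}$ guaranteed by \eqref{eq: lower} to define $b_i=a_{i,N-1}$ and the corresponding $n_i$. Then $b\in{}^\dagger C_{N-1}(P)$ by the previous step, and the reverse direction of the linear-form equivalence gives $n\in{}^\dagger C_1(2P)$.

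The main obstacle is the case analysis in the linear-form step. There are four forms and $2^3$ parity patterns of $(n_1,n_2,n_3)$, and I must check that boundary cases, where the $n$-form equals $0$ or $\pm1$, are exactly decided by the $\beta$-inequalities. I would first try to organize this by the substitution $\beta_i\mapsto 2Q-\beta_i$ for odd $n_i$, which maps the $2Q$-system to itself. The hope is that this reduces all parity patterns to the all-even case.
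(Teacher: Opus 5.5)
Your outline is the paper's: analyze $\Phi$ coordinatewise, transfer the levels $M\le N-1$ to $b$, apply Lemma \ref{eqn: four ineq}, and split each form as $Q\cdot(\text{form in } n)$ plus a form in $\beta$. But the step that carries the lemma is only announced, and the device you propose for it is false.

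\textbf{The symmetry you propose does not exist.} The substitution $\beta_i\mapsto 2Q-\beta_i$ does not preserve the $2Q$-system: for $b=(0,0,0)$ it sends $\beta=(1,1,1)$ to $(2Q-1,1,1)$, where $2Q-\sum\beta_i=-1$. Moreover, the patterns with $n_1+n_2+n_3$ odd do not reduce to the all-even one. There the $\beta$-parts are reflections $Q-H$ of the $b$-forms and can be negative. The sign must then be decided by the parity of the $n$-form.

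\textbf{What is missing is the explicit identification of the $\beta$-parts.} Set $H_0=2Q-\sum_i\beta_i$ and $H_i=-\beta_i+\sum_{j\neq i}\beta_j$. These are positive because $b\in{}^\dagger C_{N-1}(P)$. Write $x_i=Qn_i+w_i$ with $w_i=\beta_i$ for $n_i$ even and $w_i=Q-\beta_i$ for $n_i$ odd. Each of the four forms becomes $QF+S$, where $F$ runs over $2P-2-\sum_j n_j$ and $-n_i+\sum_{j\neq i}n_j$. Checking the parity patterns gives:
\begin{itemize}
\item the four $S$'s are a permutation of $(H_0,\dots,H_3)$ when $n_1+n_2+n_3$ is even, and of $(Q-H_0,\dots,Q-H_3)$ when it is odd;
\item every $F$ has the parity of $n_1+n_2+n_3$.
\end{itemize}
You also need the upper bound $H_\sigma<2Q$, which you never state. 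It is trivial for $H_0$, and $H_i\le 1+2\sum_{j\ne i}b_j\le 2Q-3$ for $i=1,2,3$.

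With these facts, $QF+S>0\iff F\ge 0$ in both cases: $F$ even with $0<S<2Q$, or $F$ odd with $-Q<S<Q$. And $F\ge 0$ for all four forms is exactly $n\in{}^\dagger C_1(2P)$. This one equivalence gives both well-definedness and the surjectivity direction. Without it, your proposal establishes neither.

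\textbf{Your injectivity argument also has a hole.} Each $a_i$ has the two preimages $(2q,r)$ and $(2q+1,Q-1-r)$ that you found. So the fact that ``$(n,b)$ is recovered from $a$ and the parity of $n$'' does not exclude two domain elements with different parities having the same image. You need that every coordinate of an element of ${}^\dagger C_{N-1}(P)$ is at most $(Q-2)/2$, which follows from $2b_i\le b_1+b_2+b_3\le Q-2$. Since $r+(Q-1-r)=Q-1$, at most one of $r$ and $Q-1-r$ can then serve as $b_i$. This determines the parity of $n_i$ from $a_i$ alone, which is what makes $\Phi^{(3)}$ injective and the inverse well defined.
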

\begin{proof}
    Let us first show that the map is well-defined.
    Given $(n_1,n_2,n_3)\in {}^\dagger C_1(2P)$ and $(b_1,b_2,b_3)\in {}^\dagger C_{N-1}(P)$, we need to show that $(\Phi(n_i,b_i))_{i=1,2,3}\in {}^\dagger C_N(P)$.

    Let us check that \eqref{eq: lower} is satisfied for $M<N$.
    In this case, combining \eqref{eq: am} and \eqref{eq: Phinb}, we see that $\Phi(n_i,b_i)_M$ is either $[b_i]_M$ or $P^M-1-[b_i]_M$.
    Therefore, \eqref{eq: lower} is satisfied for $(\Phi(n_i,b_i))_{i=1,2,3}$ since it is satisfied for $(b_1,b_2,b_3)$.

    We now check that \eqref{eq: upper} is also satisfied for $(\Phi(n_i,b_i))_{i=1,2,3}$.
Note that  $2\Phi(n_i,b_i)+1=P^{N-1}n_i+w_i$, where $w_i=2b_i+1$ if $n_i$ is even, and $w_i=P^{N-1}-(2b_i+1)$ if $n_i$ is odd.
By Lemma \ref{eqn: four ineq}, the condition \eqref{eq: upper} for $(\Phi(n_i,b_i))_{i=1,2,3}$ is satisfied if and only if we have 
\begin{align}\label{eq: P^N-1 four ineq}
\begin{split}
    P^{N-1}(2P-2-\sum_{i=1}^3 n_i)+\Big(2P^{N-1}-\sum_{i=1}^3 w_i\Big)&>0,\\
    P^{N-1}(-n_i+\sum_{j\neq i}n_j)+\Big(-w_i+\sum_{j\neq i}w_j\Big)&>0,\quad i=1,2,3.
\end{split}
\end{align}

To continue the argument, we set up the following notation.
Let $H_0=2P^{N-1}-\sum_{i=1}^3(2b_i+1)$, and let $H_i=-(2b_i+1)+\sum_{j\neq i}(2b_j+1)$. 
By Lemma \ref{eqn: four ineq}, the condition that $(b_1,b_2,b_3)\in {}^\dagger C_{N-1}(P)$ is equivalent to the positivity of the $H_i$'s.
We first observe that $H_i<2P^{N-1}$ for $i=0,1,2,3$.  Indeed, $H_0=2P^{N-1}-\sum_{i=1}^3(2b_i+1)<2P^{N-1}$.
For $i=1,2,3$, we have that

\[H_i=2(-b_i+\sum_{j\neq i}b_j)+1\leq 1+2\sum_{j\neq i}b_j\leq 1+2(P^{N-1}-2)<2P^{N-1}.\]

Now consider the four inequalities in \eqref{eq: P^N-1 four ineq}.
All the terms in the first brackets are non-negative, since
$(n_1,n_2,n_3)\in{}^\dagger C_1(2P)$.
Moreover, all four first-bracket terms
have the same parity as $n_1+n_2+n_3$.

The following table records the four second-bracket terms in
\eqref{eq: P^N-1 four ineq}.  In the rows involving $i,j,k$, the indices
$i,j,k$ are distinct elements of $\{1,2,3\}$, and the four entries are listed
in the respective order.
\[
\begin{array}{c|c}
\text{parity pattern} & \text{four second-bracket terms} \\ \hline
n_1,n_2,n_3 \text{ all even}
& (H_0,H_1,H_2,H_3)\\
n_i,n_j \text{ odd and } n_k \text{ even}
& (H_k,H_j,H_i,H_0)\\
n_i \text{ odd and } n_j,n_k \text{ even}
& (P^{N-1}-H_i,\ P^{N-1}-H_0,\ P^{N-1}-H_k,\ P^{N-1}-H_j)\\
n_1,n_2,n_3 \text{ all odd}
& (P^{N-1}-H_0,\ P^{N-1}-H_1,\ P^{N-1}-H_2,\ P^{N-1}-H_3).
\end{array}
\]
Thus, when $n_1+n_2+n_3$ is even, the second-bracket terms are a permutation
of $(H_i)_{i=0,1,2,3}$; when $n_1+n_2+n_3$ is odd, they are a permutation of
$(P^{N-1}-H_i)_{i=0,1,2,3}$.

If $n_1+n_2+n_3$ is even, then every second-bracket term is one of the positive
numbers $H_i$, so all four inequalities in \eqref{eq: P^N-1 four ineq} hold.
If $n_1+n_2+n_3$ is odd, then every first-bracket term is at least $1$, while
every second-bracket term is of the form $P^{N-1}-H_i>-P^{N-1}$, since
$H_i<2P^{N-1}$.  Hence the corresponding left hand side is strictly positive
in this case as well.

Above shows that $\Phi^{(3)}$ sends ${}^\dagger C_1(2P)\times {}^\dagger C_{N-1}(P)$ to ${}^\dagger C_{N}(P)$. We now construct its inverse.

Take an arbitrary element $(a_1,a_2,a_3)\in{}^\dagger C_N(P)$.
  By \eqref{eq: am} and \eqref{eq: lower} for \(M=N-1\), we can
choose
$b_i\in\{[a_i]_{N-1},\,P^{N-1}-1-[a_i]_{N-1}\}$
such that $(b_1,b_2,b_3)$ satisfies \eqref{eq: lower} with $M=N-1$.

We claim that $(b_1,b_2,b_3)\in{}^\dagger C_{N-1}(P)$.  Indeed, it remains
only to check \eqref{eq: am} and \eqref{eq: lower} for $M<N-1$. 
For such $M$,  we have $\{[b_i]_M,\,P^M-1-[b_i]_M\}
=
\{[a_i]_M,\,P^M-1-[a_i]_M\}$.
Therefore the choices that work for $(a_1,a_2,a_3)$ also work for
$(b_1,b_2,b_3)$.

Moreover, the choice of each $b_i$ is unique.  To see this, note that
\eqref{eq: upper} implies that every coordinate of an element of ${}^\dagger C_{N-1}(P)$ is at most $\frac{P^{N-1}-3}{2}$.
Indeed, if $(x_1,x_2,x_3)\in{}^\dagger C_{N-1}(P)$, then
$x_i\leq \sum_{j\neq i}x_j$ and $\sum_i x_i\leq P^{N-1}-2$, hence $2x_i\leq P^{N-1}-2$.  Since $P^{N-1}$ is odd, this gives the desired
bound.  
Therefore, among the two numbers $[a_i]_{N-1}$ and $P^{N-1}-1-[a_i]_{N-1}$,
at most one can be a coordinate of an element of
${}^\dagger C_{N-1}(P)$.

We can now define $n_i$ so that $a_i=\Phi(n_i,b_i)$.
Namely, let
\[
        n_i=
        \begin{cases}
        2\frac{a_i-[a_i]_{N-1}}{P^{N-1}},
        & b_i=[a_i]_{N-1},\\[6pt]
        2\frac{a_i-[a_i]_{N-1}}{P^{N-1}}+1,
        & b_i=P^{N-1}-1-[a_i]_{N-1}.
        \end{cases}
\]

It remains to show that $(n_1,n_2,n_3)\in{}^\dagger C_1(2P)$, i.e., the first brackets in the four inequalities in \eqref{eq: P^N-1 four ineq} are all non-negative.  
The argument is the reverse of the argument for well-definedness.

Since
$(a_1,a_2,a_3)\in{}^\dagger C_N(P)$, Lemma \ref{eqn: four ineq} gives the positivity of the four inequalities in \eqref{eq: P^N-1 four ineq}.  Suppose,
for contradiction, that one of the terms in the first brackets in
\eqref{eq: P^N-1 four ineq} is negative.

If $n_1+n_2+n_3$ is even, then every term in the first brackets is even, so
the negative first-bracket terms are $\leq -2$.  We have already shown that $H_i<2P^{N-1}$.
Thus the corresponding left hand side in \eqref{eq: P^N-1 four ineq} is
strictly negative, a contradiction.

If $n_1+n_2+n_3$ is odd, then every term in the first brackets is odd, so the
negative first-bracket terms are $\leq -1$.  In this case, the corresponding second bracket
is  $P^{N-1}-H_i<P^{N-1}$. Again the corresponding left hand side in
\eqref{eq: P^N-1 four ineq} is strictly negative, a contradiction.

Therefore, we have found the unique $(n_1,n_2,n_3)\in {}^\dagger C_1(2P)$ and $(b_1,b_2,b_3)\in {}^\dagger C_{N-1}(P)$, which defines $\Phi^{(3),-1}(a_1,a_2,a_3)$.
\end{proof}

\subsection{The set $\Ed_{P,N,G}$} \label{subsec: global}\;

For us, a finite graph $G$ consists of finite sets $V(G)$, $E(G)$, and $H(G)$ of
vertices, edges, and half-edges, together with maps
$\nu:H(G)\to V(G)$ and $\epsilon:H(G)\to E(G)$,
such that every fiber of $\epsilon$ has cardinality $2$.
Multiple edges and loops are allowed.
If the two half-edges over an edge $e$ have the same image under $\nu$, then
$e$ is a loop.
For a vertex $v$, the set $H_v=\nu^{-1}(v)$
is the set of branches incident to $v$, counted with multiplicity.
A graph is trivalent if $|H_v|=3$ for every vertex $v$.
In particular, a loop at $v$ contributes two of the three incident branches.
Let $G$ be a finite trivalent graph with edge set $E(G)$.

\begin{definition}\cite[\S5.1]{KW}\label{def: Ed}
Let $\Ed_{P,N,G}$ be the set of functions
 $\ell:E(G)\to \mathbb{Z}_{\geq 0}$
such that, at every vertex $v$ of $G$, the three incident branch labels form a
triple in ${}^\dagger C_N(P)$.
\end{definition}

\begin{example}
    Let $G=K_4$ be the complete graph with 4 vertices.
    Then $\Ed_{3,1,G}$ is the singleton where every edge is numbered 0.
    Direct calculation shows that $|\Ed_{3,2,G}|=|\Ed_{6,1,G}|=49$.
    Moreover, every edge is numbered $0,2$, or $3$ by $\Ed_{3,2,G}$ and every edge is numbered $0,1,$ or $2$ by $\Ed_{6,1,G}$. Changing each $2$ to $3=\Phi(2,0)$ and $1$ to $2=\Phi(1,0)$ defines a bijection $\Ed_{6,1,G}\times \Ed_{3,1,G}\cong \Ed_{3,2,G}$.
    The same idea proves Theorem \ref{thm: level-reduction} as below.
\end{example}

\begin{proof}[Proof of Theorem \ref{thm: level-reduction}]
    By iteration, it suffices to prove the one-step version
\[\Ed_{P,N,G}\cong \Ed_{2P,1,G}\times \Ed_{P,N-1,G}.\]
Indeed, this follows by applying Lemma \ref{lm: local fac} edge-wise on $G$.
The inverse construction in Lemma \ref{lm: local fac} is
coordinatewise.  More explicitly, for an edge $e$, let $b(e)$ be the unique element of
$\{[\ell(e)]_{N-1},\ P^{N-1}-1-[\ell(e)]_{N-1}\}$
which can occur as a coordinate of an element of ${}^\dagger C_{N-1}(P)$;
equivalently, the unique element of this set which is at most $(P^{N-1}-3)/2$.
Then define
\[
        n(e)=
        \begin{cases}
        2\frac{\ell(e)-[\ell(e)]_{N-1}}{P^{N-1}},& b(e)=[\ell(e)]_{N-1},\\[6pt]
        2\frac{\ell(e)-[\ell(e)]_{N-1}}{P^{N-1}}+1,& b(e)=P^{N-1}-1-[\ell(e)]_{N-1}.
        \end{cases}
\] 
Applying Lemma \ref{lm: local fac} at every vertex shows that
$(e\mapsto n(e))\in \Ed_{2P,1,G}$ and
$(e\mapsto b(e))\in \Ed_{P,N-1,G}$.
\end{proof}

\section{Polynomiality}\label{sec: polynomiality}

By \cite[Theorem 5.3]{KW}, the generic degree $\deg(V)$ of the generalized Verschiebung map is given by $\frac{|\Ed_{p,2,G}|}{|\Ed_{p,1,G}|}$, where $G$ is any genus $g$ connected finite trivalent graph.
By Theorem \ref{thm: level-reduction}, we have that 
\begin{equation}\label{eq: V 2p G}
    \deg(V)=|\Ed_{2p,1,G}|.
\end{equation}
The goal of this section is to write out an explicit polynomial of $p$ for $|\Ed_{2p,1,G}|$.
The idea is closely related to Gromov-Witten Theory, and is probably known to the experts.
For example, in \cite[Theorem A]{WakFormula}, Wakabayashi counts dormant opers using the Vafa-Intriligator formula and gives explicit expression for $|\Ed_{p,1,G}|$ in terms of trigonometric sums.
However, we decide to give a streamlined derivation of the formula, assuming only some basic enumerative combinatorics.

We start by taking the graph $G$ to be built up from blocks of the following form, where the edges will be numbered by $a,x,y,b\in \mathbb{Z}_{\geq 0}$ respectively:

\begin{center}
\centering
\begin{tikzpicture}[
    vertex/.style={circle,fill,inner sep=1.6pt},
    lab/.style={midway,fill=white,inner sep=1pt}
]
\node[vertex] (L) at (0,0) {};
\node[vertex] (R) at (2.4,0) {};

\draw (-1,0) -- node[lab,above] {$a$} (L);
\draw (R) -- node[lab,above] {$b$} (3.4,0);

\draw[bend left=35] (L) to node[lab,above] {$x$} (R);
\draw[bend right=35] (L) to node[lab,below] {$y$} (R);
\end{tikzpicture}
\end{center}
For example, when $g=5$, we take $G$ to be the following graph:
\begin{center}
\centering
\begin{tikzpicture}[
    scale=0.72,
    vertex/.style={circle,fill,inner sep=1.4pt},
    lab/.style={midway,fill=white,inner sep=1pt}
]
\foreach \i in {0,1,2,3} {
    \coordinate (A\i) at ({90-90*\i}:2.2);
    \coordinate (B\i) at ({45-90*\i}:2.2);
    \node[vertex] (L\i) at (A\i) {};
    \node[vertex] (R\i) at (B\i) {};
    \draw[bend left=18] (L\i) to (R\i);
    \draw[bend right=18] (L\i) to (R\i);
}

\draw (R0) -- node[lab,above right] {$a_2$} (L1);
\draw (R1) -- node[lab,below right] {$a_3$} (L2);
\draw (R2) -- node[lab,below left] {$a_4$} (L3);
\draw (R3) -- node[lab,above left] {$a_1$} (L0);
\end{tikzpicture}
\end{center}

For each block, and fixed numberings $0\leq a,b\leq p-1$, let
\[
T_{ab}=|\{(x,y):\; (a,x,y),(b,x,y)\in {}^\dagger C_1(2p)\}|
\]
be the number of ways to choose $x$ and $y$ so that this block obeys the conditions of $\Ed_{2p,1,G}$.  
Let $T=(T_{ab})_{0\leq a,b\leq p-1}$
be the corresponding $p\times p$ matrix.  Then
\begin{equation}\label{eqn: trace}
    |\Ed_{2p,1,G}|
=
\sum_{a_1,\ldots,a_{g-1}=0}^{p-1}
T_{a_1a_2}T_{a_2a_3}\cdots T_{a_{g-1}a_1}
=
\operatorname{Tr}(T^{g-1}).
\end{equation}
The last equality follows from some facts of transfer matrices in enumerative combinatorics as in \cite[\S4.7.1]{StanleyEC1}.  Indeed, consider
the auxiliary weighted directed graph with vertex set $\{0,\ldots,p-1\}$, in
which the edge $a\to b$ has weight $T_{ab}$.  Then the above summation is
the total weight of closed walks of length $g-1$, which is
$\operatorname{Tr}(T^{g-1})$ by \cite[first line of the proof of Corollary 4.7.3]{StanleyEC1}

We now find $\operatorname{Tr}(T^{g-1})$ by diagonalizing $T$.
For this purpose, we introduce the following notation:
\[
    N_{ab}^{c}=
    \begin{cases}
    1,& (a,b,c)\in{}^\dagger C_1(2p),\\
    0,& \text{otherwise}.
    \end{cases}
\]
Let us remark that this looks very similar to the even part of the fusion ring as in \cite[(1.1)]{RunkelSuszek}. 

Observe that 
\begin{equation}\label{eqn: tnn}
    T_{ab}=\sum_{x,y=0}^{p-1}N_{ax}^y N_{bx}^y.
\end{equation}
The next lemma turns $N_{ab}^c$ into a trigonometric sum.

\begin{lemma}\label{lm: trig}
For \(1\leq j\leq p\), set
\[
S_{aj}=
\begin{cases}
\sqrt{\frac{2}{p}}\,
\sin\!\left(\frac{(2a+1)j\pi}{2p}\right),
& 1\leq j\leq p-1,\\[6pt]
\frac{(-1)^a}{\sqrt p},
& j=p.
\end{cases}
\]
Then,
\begin{enumerate}
    \item[(a)] The matrix $S=(S_{aj})_{0\leq a\leq p-1,\ 1\leq j\leq p}$
is orthogonal.
\item[(b)] \begin{equation}\label{eq: finite-verlinde}
    N_{ab}^{c}
    =
    \sum_{j=1}^{p}
    \frac{S_{aj}S_{bj}S_{cj}}{S_{0j}}.
\end{equation}
\end{enumerate}

\end{lemma}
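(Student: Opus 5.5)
The plan is to recognize $N_{ab}^c$ as the even-label part of the $\widehat{\mathfrak{sl}}_2$ fusion rules at level $k=2p-2$, so that $k+2=2p$. Both (a) and (b) then reduce to sums over the full range $1\leq j\leq 2p-1$, which are classical. Throughout, set $u=2a+1$, $v=2b+1$, $w=2c+1$ (all odd) and $\theta_j=\frac{j\pi}{2p}$. The basic observation is that for odd $u$ we have $\sin(u\theta_{2p-j})=\sin(u\pi-u\theta_j)=\sin(u\theta_j)$, and $\sin(u\theta_p)=\sin(u\pi/2)=(-1)^a$. Hence $S_{aj}=c_j\sqrt{2/p}\,\sin(u\theta_j)$ with $c_j=1$ for $j<p$ and $c_p=1/\sqrt2$. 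For any function $f$ of $j$ invariant under $j\mapsto 2p-j$ we then get
\[\sum_{j=1}^{p}c_j^2 f(j)=\tfrac12\sum_{j=1}^{2p-1}f(j)\]
(and the $j=0$ term of any sine product vanishes). This folding identity is what makes the odd-labelled, half-range matrix $S$ behave like the full $SU(2)$ $S$-matrix $\widetilde S_{\lambda j}=\sqrt{1/p}\,\sin((\lambda+1)\theta_j)$, $0\leq\lambda\leq 2p-2$, restricted to even $\lambda=2a$.

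For (a), I compute $\sum_j S_{aj}S_{a'j}=\frac1p\sum_{j=0}^{2p-1}\sin(u\theta_j)\sin(u'\theta_j)$ via the folding identity. Next I write $\sin\alpha\sin\beta=\frac12(\cos(\alpha-\beta)-\cos(\alpha+\beta))$. Since $u\pm u'$ is even, say $u\pm u'=2m_\pm$, the relevant sums are $\sum_{j=0}^{2p-1}\cos(m\,j\pi/p)$. Such a sum equals $2p$ if $2p\mid m$ and $0$ otherwise. With $0\leq a,a'\leq p-1$ we have $0<m_+<2p$ and $|m_-|<p$, so the sum is $\delta_{aa'}$. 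A square matrix with orthonormal rows is orthogonal.

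For (b), note that $S_{0j}\neq 0$ for all $j$. The $c_j$ factors combine as $c_j^3/c_j=c_j^2$, so folding again gives
\[\sum_{j=1}^p\frac{S_{aj}S_{bj}S_{cj}}{S_{0j}}=\frac{1}{p}\sum_{j=1}^{2p-1}\frac{\sin(u\theta_j)\sin(v\theta_j)\sin(w\theta_j)}{\sin\theta_j}.\]
Write $\chi_u(\theta)=\sin(u\theta)/\sin\theta=\sum_{s=0}^{u-1}e^{i(u-1-2s)\theta}$, the $SU(2)$ character. The Clebsch--Gordan rule $\chi_u\chi_v=\sum_{r}\chi_r$ holds, with $r$ running over $|u-v|+1,|u-v|+3,\dots,u+v-1$; this can be verified by telescoping $\sin(u\theta)\sin(v\theta)$ through $\cos$ differences. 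The sum becomes
\[\sum_r\frac1p\sum_{j}\sin(r\theta_j)\sin(w\theta_j).\]
The index $r$ is odd, and the orthogonality computation from (a) extends to all $1\leq r\leq 4p-1$ provided we track reflections: $\sin(r\theta_j)=-\sin((4p-r)\theta_j)$, and $r=2p$ contributes zero.

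The main obstacle is this last bookkeeping step, the truncation. Terms with $r\leq 2p-1$ contribute $\delta_{rw}$. Terms with $r>2p$ reflect to $4p-r$ with a minus sign and cancel against the top of the range; this is the standard Kac--Walton reflection. I expect to check that the surviving count is exactly $1$ when $|a-b|\leq c\leq a+b$ and $a+b+c\leq 2p-2$, and $0$ otherwise. The triangle conditions come from the range of $r$, and the level bound comes from the cancellation $w\leftrightarrow 4p-w$. Concretely, I would split into the cases $u+v-1\leq 2p-1$ (no reflection needed) and $u+v-1>2p-1$, and in the second case pair $r$ with $4p-r$. A fallback is to cite the Verlinde formula for $SU(2)$ at level $2p-2$, whose fusion coefficients for even labels $2a,2b,2c$ are exactly the conditions defining ${}^\dagger C_1(2p)$ via Lemma \ref{eqn: four ineq}, and to derive (b) from it by the folding identity.
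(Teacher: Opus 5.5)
Your proposal is correct, but it arranges the argument differently from the paper.

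\textbf{The paper's route.} It proves (a) as \emph{column} orthogonality, summing over $a$ with the arithmetic-progression cosine formula. For (b) it first proves, for each fixed $j$, the eigenvector identity $\sum_c N_{ab}^c\sin((2c+1)\theta_j)=\sin((2a+1)\theta_j)\sin((2b+1)\theta_j)/\sin\theta_j$. This comes from summing sines over the explicit interval of admissible $c$, split into the cases $a+b\le p-1$ and $a+b\ge p$. It then checks the column $j=p$ by a parity count, and only at the end inverts with orthogonality.

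\textbf{Your route.} You fold $\sum_{j=1}^{p}$ into $\frac12\sum_{j=1}^{2p-1}$. This identifies $S$ with the even-weight rows of the $SU(2)$ level-$(2p-2)$ $S$-matrix, so the column $j=p$ needs no separate treatment and row orthogonality becomes a roots-of-unity sum. You then evaluate the right side of (b) directly, via Clebsch--Gordan and termwise orthogonality.

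\textbf{The bookkeeping step.} The step you flagged as the main obstacle closes cleanly and needs no case split. For odd $1\le r\le 4p-3$ and $w=2c+1$ one has $\frac1p\sum_{j=0}^{2p-1}\sin(r\theta_j)\sin(w\theta_j)=\delta_{r,w}-\delta_{r,4p-w}$. Hence the total equals the indicator of $|a-b|\le c\le a+b$ minus the indicator of $a+b+c\ge 2p-1$. Since $a,b,c\le p-1$, the second condition implies the first, which leaves exactly $N_{ab}^c$.

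\textbf{Comparison.} Your route treats all $j$ uniformly and explains the level bound $a+b+c\le 2p-2$ conceptually, as the Kac--Walton reflection $r\mapsto 4p-r$. The paper's route stays with the $p\times p$ matrix and the concrete interval of admissible $c$, and it produces along the way the eigenvector relation $\sum_c N_{ab}^c S_{cj}=S_{aj}S_{bj}/S_{0j}$.
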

\begin{proof}
Set $\theta_j=\frac{j\pi}{2p}$

(a)
Since $2\sin X\sin Y=\cos(X-Y)-\cos(X+Y)$, we have that, for $1\leq j,k\leq p-1$,
\[
\sum_{a=0}^{p-1}S_{aj}S_{ak}
=
\frac1p
\sum_{a=0}^{p-1}
\left[
\cos((2a+1)(\theta_j-\theta_k))
-
\cos((2a+1)(\theta_j+\theta_k))
\right].
\]
Now use the arithmetic progression cosine formula
$\sum_{a=0}^{p-1}\cos((2a+1)u)=\frac{\sin(2pu)}{2\sin u}$,
whenever $\sin u\neq 0$. In our case, $u=(j-k)\pi/(2p)$, or $u=(j+k)\pi/(2p)$. Therefore, the sum vanishes unless $j=k$. 
If $j=k$, the first cosine sum is $p$, and the second still vanishes. Therefore, $\sum_{a=0}^{p-1}S_{aj}S_{ak}=\delta_{jk}$.
For the last column, we have that $S_{a,p}=\frac{(-1)^a}{\sqrt p}
=
\frac{1}{\sqrt p}\sin((2a+1)\pi/2)$.
The same product-to-sum argument gives $\sum_{a=0}^{p-1}S_{a,p}S_{a,k}=0
\qquad (1\leq k\leq p-1)$,
and clearly $\sum_{a=0}^{p-1}S_{a,p}^2=1$.

(b). 
We claim that for every $1\leq j\leq p$,
\begin{equation}
    \label{eqn: key trig}
    \sum_{c=0}^{p-1}N_{ab}^{c}\sin((2c+1)\theta_j)
=
\frac{
\sin((2a+1)\theta_j)\sin((2b+1)\theta_j)
}{
\sin\theta_j
}.
\end{equation}

Because the condition is symmetric in \(a,b\), assume \(a\geq b\).

First suppose \(a+b\leq p-1\). Then the possible values for $c$ are $a-b,\ a-b+1,\ldots,a+b$, so that \[\sum_c \sin((2c+1)\theta_j)
=
\sum_{r=0}^{2b}
\sin((2(a-b+r)+1)\theta_j)=\frac{
\sin((2b+1)\theta_j)\sin((2a+1)\theta_j)
}{
\sin\theta_j
},\]
where we have used the arithmetic progression sine formula for the last equality.

If $a+b\geq p$, then the possible values for $c$ are $a-b,\ a-b+1,\ldots,2p-2-a-b$, and we can conclude using the same idea as above.

Now \eqref{eqn: key trig} give us
$\sum_{c=0}^{p-1}N_{ab}^{c}S_{cj}
=
\frac{S_{aj}S_{bj}}{S_{0j}}$ for every $1\leq j\leq p$. 
When $j=p$, the same formula becomes $\sum_{c} N_{ab}^c(-1)^c=(-1)^{a+b}$, which is also true.
Indeed, from above we see that the admissible
values of $c$ form an interval of odd length starting with $a-b$, hence $\sum_c (-1)^c=(-1)^{a-b}=(-1)^{a+b}$.

Finally, we can apply the orthogonality of $S$ to conclude (b).
\end{proof}

\begin{proof}[Proof of Theorem \ref{thm: polynomiality}]
Combining \eqref{eqn: tnn} and \eqref{eq: finite-verlinde}, we get
\[
\begin{aligned}
T_{ab}
&=
\sum_{x,y=0}^{p-1}N_{ax}^{y}N_{bx}^{y} =
\sum_{x,y=0}^{p-1}
\left(\sum_{j=1}^{p}\frac{S_{aj}S_{xj}S_{yj}}{S_{0j}}\right)
\left(\sum_{k=1}^{p}\frac{S_{bk}S_{xk}S_{yk}}{S_{0k}}\right) \\
&=
\sum_{j,k=1}^{p}
\frac{S_{aj}S_{bk}}{S_{0j}S_{0k}}
\left(\sum_{x=0}^{p-1}S_{xj}S_{xk}\right)
\left(\sum_{y=0}^{p-1}S_{yj}S_{yk}\right).
\end{aligned}
\]
By Lemma \ref{lm: trig}.(a), both inner sums are equal to $\delta_{jk}$, hence
$T_{ab}
=
\sum_{j=1}^p \frac{S_{aj}S_{bj}}{S_{0j}^2}$.

 Therefore, if we set $D=\operatorname{diag}\left(S_{01}^{-2},\ldots,S_{0p}^{-2}\right)$,
then
$T=SDS^t$, so that $\operatorname{Tr}(T^{g-1})
=
\sum_{j=1}^p S_{0j}^{-2(g-1)}$.
Combining with \eqref{eq: V 2p G} and \eqref{eqn: trace},
we obtain
\begin{equation}\label{eqn: deg trig}
\deg(V)
=
p^{g-1}
+
\left(\frac p2\right)^{g-1}
\sum_{j=1}^{p-1}
\csc^{2g-2}\left(\frac{\pi j}{2p}\right).
\end{equation}

By symmetry about $p$,
\[
\sum_{j=1}^{2p-1}
\csc^{2g-2}\left(\frac{\pi j}{2p}\right)
=
2\sum_{j=1}^{p-1}
\csc^{2g-2}\left(\frac{\pi j}{2p}\right)+1.
\]
Hence
\[
\deg(V)
=
p^{g-1}
+
\frac{p^{g-1}}{2^g}
\left(
\sum_{j=1}^{2p-1}
\csc^{2g-2}\left(\frac{\pi j}{2p}\right)-1
\right).
\]

Now the theorem follows from Zagier's formula \cite[Theorem 1(i),(iii)]{Zagier}:
\[
\sum_{j=1}^{2p-1}
\csc^{2g-2}\left(\frac{\pi j}{2p}\right)
=
\sum_{r=0}^{g-1}
\frac{(-1)^{r-1}2^{4r}B_{2r}}{(2r)!}
\left[z^{-2r}\right]
\csc^{2g-2}(z)
\,p^{2r},
\]
where $\left[z^{-2r}\right]
\csc^{2g-2}(z)$ means the coefficient of $z^{-2r}$ in the Laurent expansion of $\csc^{2g-2}(z)$ at $z=0$.
\end{proof}

\section{Dormant opers}\label{sec: oper}

\subsubsection*{Opers}\;

Let $X$ be a smooth projective curve over an algebraically closed field $k$ of genus $g>1$.
Let $\omega_X$ be the canonical sheaf of $X$.

In this paper, by opers, we mean what are usually called $\operatorname{PGL}_2$-opers.
Namely, an oper over $X$ is a $\mathbb{P}^1$-bundle $\pi: P\to X$ together with a connection $\nabla$, i.e., a splitting of the tangent exact sequence 
\[
0\longrightarrow T_{P/X}
\longrightarrow T_P
\mathrel{\substack{\xrightarrow{\ d\pi\ }\\[-0.6ex]\xleftarrow[\ \nabla\ ]{}}}
\pi^*T_X
\longrightarrow 0,
\]such that there is a section $\sigma: X\to P$ so that we have an induced isomorphism $d\sigma-\sigma^*\nabla:T_X\to \sigma^*T_{P/X}$.

Note that the choice of $\sigma$ is unique up to isomorphism if it exists.
Indeed, if $\sigma'$ is another such section, then $\sigma'$ determines a nonzero section of the bundle $\sigma^*T_{P/X}$, contradicting the negativity of the degree of $\sigma^*T_{P/X}\cong T_X$.

\subsubsection*{Uniformization}\;

As explained in \cite[\S1]{MocIntro}, one
 motivation for the introduction of opers is that, over $\mathbb{C}$, every $X$ has a canonical oper over it. Indeed, let $\mathbb{H}\subset \mathbb{C}$ be the upper half plane and $\pi_1(X)\to \operatorname{Aut}(\mathbb{H})\subseteq \operatorname{Aut}(\mathbb{P}^1_{\mathbb{C}})$ be the representation associated to the universal cover of $X$.
Then the $\mathbb{P}^1$-bundle on $X$ given by the quotient of $\mathbb{H}\times \mathbb{P}^1_{\mathbb{C}}$ by the diagonal action of $\pi_1(X)$ carries a canonical oper structure: the connection is given by descending the trivial connection on $\mathbb{H}\times \mathbb{P}^1_{\mathbb{C}}$ and the section $\sigma$ is given by descending the tautological section $\mathbb{H}\to \mathbb{H}\times \mathbb{P}^1_{\mathbb{C}}$.

One is interested in studying how special the canonical oper is. For example, the associated representation $\pi_1(X)\to \operatorname{PGL}_2(\mathbb{C})$ is real-valued.
A natural way to imitate this property in the $p$-adic setting is to require the oper to admit some kind of Frobenius structure.
In turn, in characteristic $p$, this requirement boils down to the nilpotency of the $p$-curvature.
Among the opers with nilpotent $p$-curvatures, the ones with \textit{zero} $p$-curvatures have been particularly studied.

\subsubsection*{Dormant opers}\;

An oper $(P,\nabla)$ on $X$ is called dormant if its $p$-curvature $\psi(\nabla)$ vanishes.
Equivalently, by Cartier descent and the \'etale local triviality of $\operatorname{PGL}$-torsors, this means that the pair $(P,\nabla)$ is \'etale locally isomorphic to the constant $\mathbb{P}^1$-bundle together with the canonical connection.

\subsubsection*{Over $W_N(k)$}\;

Let $W_N(k)$ be the $N$-th ring of Witt vectors.
The definition of opers as above also makes sense for a lift $\widetilde{X}$ over $W_N(k)$ of $X$.
The dormancy of an oper over $\widetilde{X}$ is treated by Wakabayashi in \cite{WakTQFT}.
The definition is somewhat involved, but it is shown in \cite[Corollary 3.12]{WakTQFT} to be equivalent to the \'etale-local constancy condition $(P,\nabla_P)\cong (\mathbb{P}^1\times \widetilde{X},\nabla^{\mathrm{can}})$ as above.

\subsubsection*{Counting dormant opers}\;

It is shown by Wakabayashi in \cite[Theorem D, Theorem E]{WakTQFT} that, for a general $X$ with a lift $\widetilde{X}$ over $W_N(k)$, the number $|\Ed_{p,N,G}|$ is the same as the number of dormant opers over $\widetilde{X}$.
Therefore, the oper-theoretic corollary of our Theorem \ref{thm: level-reduction} is:
\begin{corollary}\label{cor: counts}
    Let $X$ be a general curve of genus $g>1$ over an algebraically closed field $k$ together with a lift $\widetilde{X}$ over $W_N(k)$.
    Let $\operatorname{Op}^Z_X$ and $\operatorname{Op}_{\widetilde{X}}^Z$ be the set of isomorphism classes of dormant opers over $X$ and $\widetilde{X}$ respectively.
    Then, we have the following equality:
    \[|\operatorname{Op}_{\widetilde{X}}^Z|=|\operatorname{Op}^Z_X|\cdot \deg(V)^{N-1}.\]
\end{corollary}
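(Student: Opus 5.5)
The corollary follows by combining Wakabayashi's identification of dormant oper counts with edge-numbering counts, the level-reduction Theorem \ref{thm: level-reduction}, and the identity \eqref{eq: V 2p G}. Fix a connected trivalent graph $G$ of genus $g$, for instance the necklace of blocks used in \S\ref{sec: polynomiality}. By \cite[Theorem D, Theorem E]{WakTQFT}, for a general $X$ with lift $\widetilde{X}$ over $W_N(k)$ we have $|\operatorname{Op}_{\widetilde{X}}^Z|=|\Ed_{p,N,G}|$. The case $N=1$ gives $|\operatorname{Op}^Z_X|=|\Ed_{p,1,G}|$, which is also Mochizuki's count as summarized in \cite[Theorem 3.9]{LO}. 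The characteristic $p$ is odd, since the setting of Theorem \ref{thm: polynomiality} and \cite{KW} assumes $p>2$; hence Theorem \ref{thm: level-reduction} applies with $P=p$.

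The steps are as follows. First, Theorem \ref{thm: level-reduction} gives
\[|\Ed_{p,N,G}|=|\Ed_{p,1,G}|\cdot|\Ed_{2p,1,G}|^{N-1}.\]
Second, by \eqref{eq: V 2p G}, which combines \cite[Theorem 5.3]{KW} with the $N=2$ case of Theorem \ref{thm: level-reduction}, we have $|\Ed_{2p,1,G}|=\deg(V)$. Substituting these into the oper counts gives $|\operatorname{Op}_{\widetilde{X}}^Z|=|\operatorname{Op}^Z_X|\cdot\deg(V)^{N-1}$, as claimed.

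The main obstacle is ensuring that the genericity hypotheses line up, since none of the steps requires new computation. The curve $X$ must be general in three senses at once: Wakabayashi's theorem must hold at level $N$ and at level $1$, and $\deg(V)$ must be the generic degree for $X$ as in \cite{KW}. I would handle this by intersecting the finitely many dense open conditions in the moduli of curves. I would also note that the counts on the right-hand side are independent of the choice of $G$, because they are combinatorial invariants depending only on $g$. Consequently, the single graph $G$ fixed above can be used in all three cited results simultaneously.
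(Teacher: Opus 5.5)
Your proposal is correct and follows essentially the paper's argument: Wakabayashi's identification $|\operatorname{Op}^Z_{\widetilde{X}}|=|\Ed_{p,N,G}|$ (applied at levels $N$ and $1$), Theorem \ref{thm: level-reduction} with $P=p$ odd, and $\deg(V)=|\Ed_{2p,1,G}|$ from \eqref{eq: V 2p G}. Your genericity bookkeeping is harmless but not needed for the $\deg(V)$ factor, since that is already a number depending only on $p$ and $g$.
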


\;\\
\;\\

\subsection*{Acknowledgements}
I would like to thank Michel Gros, who,
during the 2023-2024 special year program on $p$-adic geometry at IAS, introduced me to the works of Wakabayashi.
I would also like to thank Junliang Shen: recently some  discussions with Junliang led me to think about the generalized Verschiebung map.
This work is supported by an AMS-Simons travel grant.

\;\\

\;\\

\footnotesize{
 \textsc{Department of Mathematics, Yale University,  New Haven, CT, 06511,
USA}\par\nopagebreak
  \textit{E-mail address}: \texttt{siqing.zhang.math@gmail.com}}

\end{document}